\newtheorem{lemma}{Lemma}
\newtheorem{theorem}{Theorem}
\newtheorem{example}{Example}
\title[Solutions of Non-Homogenous Linear Differential Equations]{Solutions of Non-Homogenous Linear Differential Equations}
\author[Naveen Mehra and S. K. Chanyal]{Naveen Mehra and S. K. Chanyal}
\address{Naveen Mehra, Department of Mathematics, Kumaun University, D.S.B. Campus, Nainital-263001, Uttarakhand, India}
\email{naveenmehra00@gmail.com}
\address{S. K. Chanyal, Department of Mathematics, Kumaun Univesity, D.S.B. Campus, Nainital-263001, Uttarakhand, India.}
\email{skchanyal.math@gmail.com}
\subjclass[2020]{34M10, 30D35}
\keywords{entire function, non-homogenous linear differential equation,  order of growth and exponent of convergence}
\thanks{The research work of the first author is supported by a research fellowship from the Council of Scientific and Industrial Research(CSIR), New Delhi.} 
\begin{document}
	\maketitle
		\begin{abstract}
This article is devoted to the study of solutions of non-homogenous linear  differential equations having entire coefficients. We get all non-trivial solutions of infinite order of equation $f^{(n)}+a_{n-1}(z)f^{(n-1)}+\ldots +a_1f'+a_0f=H(z)$, by restricting the conditions on coefficients.
	\end{abstract}
	
	\section{Introduction}
The results of this article depends heavily on the concepts of value distrbution theory. We discuss some basic notations here, for detailed theory, reader can see \cite{laine1993nevanlinna}. Let $(a_n)_{n\in\mathbb{N}}$ be a sequence of non-zero roots of function $f(z)$ with $\lim_{n\to\infty}a_n=\infty$, then exponent of convergence is defined as   
\begin{center}
	$\displaystyle{\lambda(f)= \inf\lbrace\alpha>0\displaystyle{|}\sum\limits_{n=1}^{\infty}|a_n|^{-\alpha}<\infty\rbrace}$.
\end{center}
The order of growth for an entire function $f(z)$ is given by
\begin{center}
	$\displaystyle{\rho(f)=\overline{\lim_{r \to \infty}}\,\displaystyle{\frac{\log^+\log^+{M(r,f)}}{\log{r}}}},$
\end{center}	
where $M(r,f)$ is the maximum of the function $f(z)$ on the disc of radius $|z|=r.$ The $n^{th}$ order homogenous linear differential equation for function $f$ with an entire coefficients $a_0(z), a_1(z), \ldots, a_{n-1}(z)$ is
\begin{equation}\label{norder}
	f^{(n)}+a_{n-1}(z)f^{(n-1)}+\ldots +a_1(z)f'+a_0(z)f=0
\end{equation}
and the associated non-homogenous linear differential equation is
\begin{equation}\label{nonhomonorder}
	f^{(n)}+a_{n-1}(z)f^{(n-1)}+\ldots +a_1f'+a_0f=H(z),
\end{equation}
where $H(z)$ is an entire function. If all the coefficients of equation \eqref{norder} are polynomials, then all solutions are of finite order. The solutions of associated non-homogenous equation \eqref{nonhomonorder} are of finite order if $H(z)$ is of finite order(see \cite[Lemma 2]{shian}). If $a_p$ be the last coefficient which is transcendental entire function, then atmost $p$ linearly independent solutions of equation \eqref{norder} are of finite order. Thus, if atleast one of the coefficient is transcendental entire function, then atmost all solutions of equation \eqref{norder} and \eqref{nonhomonorder} are of infinite order. Let $\rho$ be the minimal order of solutions of equation \eqref{norder}, then there may exist atmost one solution of order $<\rho$ of equation \eqref{nonhomonorder}. Thus, if all non-trivial solution of equation \eqref{norder} are of infinite order, there may exist finite order solution of equation \eqref{nonhomonorder}, we have illustrated this by following example. 
\begin{example}
	The equation $$f''+z f' + e^z f=e^{-z}(1-z) +1$$ has a finite order solution that is $f(z)=e^{-z},$ whereas the associated homogenous equation has all non-trivial solutions of infinite order.
\end{example}

\begin{example}
	Let $b(z)$ be a finite order entire function and has multiply connected Fatou component.
Then, the equation $$f''-e^zf'+b(z)f=0,$$ has all non-trivial solution of infinite order(see \cite[Theorem B]{mehra}). But the associated non-homogenous equation $$f''-e^zf'+b(z)f=e^{-z}(1+b(z))+1$$ has finite order solution $f(z)=e^{-z}.$
\end{example}
The main aim of this article is to find such conditions on coefficients of non-homogenous equation \eqref{nonhomonorder} to ensure that all non-trivial solutions are of infinite order.
\section{Lemmas}
The following lemma is given by Gundersen \cite{gundersen}, plays very crucial role in proving our results. It provides an estimate of logarithmic derivative of finite order transcendental meromorphic function.
\begin{lemma}\cite{gundersen}\label{lgundersen} Let $f$ be a transcendental meromorphic function with finite order and $(k, j)$ be a finite pair of integers that satisfies $k > j \geq 0.$ Let $\epsilon>0$ be a given constant. Then, the following three statements hold:
	\begin{enumerate}
		\item[(i)] there exists a set $E_1 \subset [0, 2\pi)$ that has linear measure zero, such that: if $\psi_0 \in [0, 2\pi) -E_1$, there is a constant $R_0 = R_0(\psi_0) > 0$ such that for all z satisfying $\arg z = \psi_0$ and $|z| \geq R_0$ and for all $(k, j) \in \Gamma$,
		\begin{equation}\label{f"byf}
			\left|\frac{f^{(k)}(z)}{f^{(j)}(z)}\right| \leq |z|^{(k-j)(\rho-1+\epsilon)}.
		\end{equation}
		\item[(ii)] there exists a set $E_2 \subset (1,\infty)$ that has finite logarithmic measure, such that for all z with $|z| \notin E_2 \cup[0, 1]$ and for all $(k, j) \in \Gamma$,
		the inequality \eqref{f"byf} holds.
		\item[(iii)] there exists a set $E_3 \subset [0,\infty)$ that has finite linear measure, such that for all z with $|z| \notin E_3$ and for all $(k, j) \in \Gamma$,
		\begin{equation}\label{f"byf-2}
			\left|\frac{f^{(k)}(z)}{f^{(j)}(z)}\right| \leq |z|^{(k-j)(\rho+\epsilon)}.
		\end{equation}
	\end{enumerate}
\end{lemma}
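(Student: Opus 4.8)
The plan is to follow Gundersen's original reasoning, whose engine is the Poisson--Jensen formula. Since $f$ is meromorphic of finite order $\rho$, for $|w|=r:=|z|$ and any $R>r$ one has the standard representation of $\log|f(w)|$ as a Poisson integral of $\log|f|$ over $|w|=R$ plus logarithmic sums over the zeros $a_\mu$ and poles $b_\nu$ of $f$ in $|w|<R$. Two reductions make the target tractable. First, it suffices to treat $j=0$: indeed $g:=f^{(j)}$ is meromorphic of order $\le\rho$ and $f^{(k)}/f^{(j)}=g^{(k-j)}/g$, so estimates $|g^{(m)}(z)/g(z)|\le|z|^{m(\rho-1+\epsilon)}$ off the relevant small sets (and, for (iii), the analogue with $\rho$ in place of $\rho-1$) yield all three statements. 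Second, with $L:=g'/g$ one has $g^{(m)}/g=\sum c_{j_1\dots j_s}\,L^{(j_1)}\cdots L^{(j_s)}$, the sum taken over tuples with $\sum_{i=1}^{s}(j_i+1)=m$; hence it is enough to prove $|L^{(\ell)}(z)|\le|z|^{(\ell+1)(\rho-1+\epsilon)}$ off the exceptional set, since then each such monomial has size at most $|z|^{m(\rho-1+\epsilon)}$.

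To estimate $L^{(\ell)}$, differentiate the Poisson--Jensen formula $\ell+1$ times, using a local analytic branch of $\log g$: $L^{(\ell)}(z)$ is a Poisson-type integral whose kernel has size $O(r^{-(\ell+1)})$ once we set $R=2r$, plus sums of terms of size $O(|z-a_\mu|^{-(\ell+1)})$ and $O(|z-b_\nu|^{-(\ell+1)})$ over the zeros and poles in $|w|<2r$, plus ``reflected'' terms of size $O\bigl(R^{\ell+1}/(R^2-Rr)^{\ell+1}\bigr)=O(r^{-(\ell+1)})$. By the finite-order bound $T(t,g)=O(t^{\rho+\epsilon})$ the integral term is $O(r^{-(\ell+1)})\cdot O\bigl(T(2r,g)\bigr)=O(r^{\rho-\ell-1+\epsilon})$, which sits inside $|z|^{(\ell+1)(\rho-1+\epsilon)}$; the same holds for the reflected sums, there being $n(2r,1/g)+n(2r,g)=O(r^{\rho+\epsilon})$ of them. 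The only delicate pieces are $\sum_{|a_\mu|<2r}|z-a_\mu|^{-(\ell+1)}$ and its pole analogue; via $\sum|x_\mu|^{\ell+1}\le(\sum|x_\mu|)^{\ell+1}$ these reduce to forcing $\sum_{|a_\mu|<2r}|z-a_\mu|^{-1}=O(r^{\rho-1+\epsilon})$, which one obtains by deleting from the plane a disk about each zero and each pole, with radii chosen summable in accordance with the convergence exponents of the zeros and poles (both $\le\rho$). Outside this union the sum obeys the required bound; the union can, moreover, be arranged so that the set of excluded \emph{directions} has linear measure zero (this gives (i)), or so that the set of excluded \emph{radii} has finite logarithmic measure (giving (ii)) or finite linear measure (giving (iii), where a coarser radius choice trades $\rho-1+\epsilon$ for $\rho+\epsilon$). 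For (i) one needs in addition a Fubini-type argument in the angular variable together with the fact that the zeros and poles cannot accumulate along a fixed ray.

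The main obstacle is precisely this exceptional-set bookkeeping: the radii of the deleted disks must be balanced simultaneously against the counting estimate $n(t)=O(t^{\rho+\epsilon})$ and against the prescribed smallness -- zero angular measure, finite logarithmic measure, or finite linear measure -- of the set one is permitted to discard, and for (i) the whole estimate must then be pushed through ray by ray. The remaining ingredients -- differentiating Poisson--Jensen, the choice $R=2r$, and the two reductions via the differential-polynomial identity and via $g=f^{(j)}$ -- are routine once that construction is in hand.
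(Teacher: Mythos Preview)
The paper does not prove this lemma at all: it is quoted verbatim from Gundersen \cite{gundersen} and used as a black box, with no argument supplied. So there is no ``paper's own proof'' to compare against; what you have written is a sketch of Gundersen's original argument, not something the present paper attempts.

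Your outline is broadly faithful to Gundersen's method---differentiate the Poisson--Jensen formula with $R$ a fixed multiple of $r$, reduce to bounding the logarithmic derivative and its derivatives, and excise small discs about the zeros and poles with radii chosen so the excluded set is small in the required sense. Two cautions. First, the reduction to $j=0$ via $g=f^{(j)}$ is fine for the inequality, but you must note that $\rho(g)\le\rho(f)$ so that the exponent $\rho$ in the final bound is indeed that of $f$; you implicitly use this. Second, the step ``$\sum|z-a_\mu|^{-(\ell+1)}\le\bigl(\sum|z-a_\mu|^{-1}\bigr)^{\ell+1}$'' is correct but wasteful, and in Gundersen's paper the actual argument handles the higher powers more directly; your shortcut still lands on the right exponent $(\ell+1)(\rho-1+\epsilon)$, so it suffices for the stated conclusion. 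The genuinely delicate part---engineering the deleted discs so that the exceptional set has zero angular measure for (i), finite logarithmic measure for (ii), or finite linear measure for (iii)---you have identified correctly but only described; turning that description into a proof is most of the work, and for (i) in particular the ray-by-ray uniformity requires care beyond a Fubini remark.
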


If an entire function $A(z)$ is of finite order of growth and satisfies $\lambda <\rho,$ then it can be written in the form $A(z)=h(z)e^{P(z)},$ where $h(z)$ is an entire function, $P(z)$ is a polynomial of degree $n$ and $\rho(h)<\deg (P).$ Let $P(z)=a_nz^n+\ldots +a_0$ and $z=re^{\iota\theta}$ then the notation $\delta$ is given as $\delta(P, \theta)=\Re(a_ne^{\iota n\theta}).$
The next lemma gives an estimates of an entire function of integral order. We have used this Lemma to prove Theorem \ref{mainthm5} and \ref{mainthm3}.

\begin{lemma}\cite{banklainelangley}\label{lcritical}
	Assume $g(z) = h(z)e^{P(z)}$ is an entire function for $z=re^{\iota\theta}$ satisfying $\lambda(g) < \rho(g) = n$, where $P(z)$ is a polynomial of degree $n$. Then, for
	every $\epsilon > 0$, there exists $E \subset [0, 2\pi)$ of linear measure zero satisfying
	\begin{enumerate}[(i)]
		\item for $\theta \in [0, 2\pi)\setminus E$ with $\delta(P, \theta) > 0$, there exists $R > 1$ such that
		$$\exp ((1 - \epsilon)\delta(P, \theta)r^n) \leq |A(re^{\iota\theta})|\leq \exp ((1 + \epsilon)\delta(P, \theta)r^n)$$
		for $r > R$,
		\item for $\theta \in [0,2\pi)\setminus E$ with $\delta(P, \theta) < 0$, there exists $R > 1$ such that
		$$\exp ((1 + \epsilon)\delta(P, \theta)r^n) \leq |A(re^{\iota\theta})| \leq \exp ((1 - \epsilon)\delta(P, \theta)r^n)$$ 
		for $r > R$.
	\end{enumerate}
\end{lemma}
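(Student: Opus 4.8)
The plan is to reduce everything to a direct analysis of $|g(re^{\iota\theta})| = |h(re^{\iota\theta})|\,e^{\Re P(re^{\iota\theta})}$ (the symbol $A$ in the displayed inequalities being evidently a misprint for $g$). Writing $P(z)=a_nz^n+\cdots+a_0$ and $z=re^{\iota\theta}$, one has $\Re P(re^{\iota\theta}) = \Re(a_ne^{\iota n\theta})r^n + O(r^{n-1}) = \delta(P,\theta)r^n + O(r^{n-1})$, where for a fixed $\theta$ with $\delta(P,\theta)\neq 0$ the error term is $o(r^n)$ and is in fact bounded by $\epsilon|\delta(P,\theta)|r^n$ once $r$ is large (depending on $\theta$ and $\epsilon$). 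Hence it suffices to produce a set $E\subset[0,2\pi)$ of linear measure zero such that, for every $\theta\notin E$, $\log|h(re^{\iota\theta})| = o(r^n)$ as $r\to\infty$ along the ray $\arg z=\theta$; the two chains of inequalities then follow by exponentiating $\log|g(re^{\iota\theta})| = \delta(P,\theta)r^n + o(r^n)$, the reversal of the inequalities in (ii) relative to (i) being exactly the effect of $\delta(P,\theta)<0$ there.

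The upper bound $\log|h(re^{\iota\theta})| = o(r^n)$ is immediate and needs no exceptional set: since $\rho(h)<n$ (this is precisely the information recorded just before the statement), choosing $\mu$ with $\rho(h)<\mu<n$ gives $\log|h(re^{\iota\theta})|\le\log M(r,h)\le r^\mu$ for all large $r$ and all $\theta$. The lower bound is the heart of the matter. I would invoke the Hadamard factorisation $h(z)=c\,z^m\Pi(z)e^{Q(z)}$, where $\Pi$ is the canonical product over the zeros $(a_k)$ of $h$ and $\deg Q\le\rho(h)<n$. Since the zeros of $h$ are exactly the zeros of $g$, we have $\rho(\Pi)=\lambda(g)<n$, and the terms $\log|c|$, $m\log r$ and $\Re Q(re^{\iota\theta}) = O(r^{\deg Q})$ are all $o(r^n)$ for every $\theta$. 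For the canonical product I would use the standard minimum-modulus estimate: fixing $\epsilon>0$ and $s$ large enough that $s+1>\lambda(g)$, outside the union of the disks $D_k = \{\,|z-a_k|<|a_k|^{-s}\,\}$ one has $\log|\Pi(z)|\ge -|z|^{\lambda(g)+\epsilon}$ for $|z|$ large. The disk $D_k$ subtends an angle $\lesssim |a_k|^{-s-1}$ at the origin, and $\sum_k|a_k|^{-s-1}<\infty$; so by a Borel--Cantelli argument the set $E$ of directions $\theta$ for which the ray $\arg z=\theta$ meets infinitely many $D_k$ has linear measure zero (enlarge $E$ by the countably many directions of the $a_k$ themselves). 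For $\theta\notin E$ the ray eventually avoids every $D_k$, whence $\log|\Pi(re^{\iota\theta})|\ge -r^{\lambda(g)+\epsilon} = o(r^n)$ for $r>R(\theta)$, and combining the four pieces gives $\log|h(re^{\iota\theta})| = o(r^n)$ for all $\theta\notin E$.

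It then remains to assemble the estimates as in the first paragraph, with the final $E$ taken to be the null set produced above. The step I expect to be the main obstacle is the lower bound for the canonical product together with the verification that the bad directions form a null set: the subtlety is that we need a \emph{pointwise} lower bound valid on a whole ray for all $r$ beyond a threshold, not merely outside an unbounded set of radii of finite measure, and this is exactly what the disk-avoidance plus Borel--Cantelli argument delivers; everything else (the expansion of $\Re P$, the trivial upper bound, and the exponentiation) is routine. A slightly different route would replace the explicit Hadamard/canonical-product computation by quoting a ready-made lemma to the effect that an entire function of order $\tau$ satisfies $\log|h(z)|\ge -|z|^{\tau+\epsilon}$ off a union of disks of finite total angular measure, after which the Borel--Cantelli step is identical.
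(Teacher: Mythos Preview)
The paper does not prove this lemma; it is quoted from Bank--Laine--Langley and used as a black box in the proofs of the later theorems, so there is no argument here to compare yours against. Your reconstruction is correct and is the standard proof: write $\log|g(re^{\iota\theta})|=\Re P(re^{\iota\theta})+\log|h(re^{\iota\theta})|$, expand $\Re P=\delta(P,\theta)r^n+O(r^{n-1})$, bound $\log|h|$ above trivially from $\rho(h)<n$, and bound it below via the minimum-modulus estimate for the canonical product over the zeros together with the Borel--Cantelli angular argument showing that the rays meeting infinitely many of the excluded disks form a set of linear measure zero.
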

The following lemma gives an estimate of an entire function having order of growth strictly less than half and it is given by Besicovitch \cite{besicovitch}. It is used to prove conclusion of Theorem \ref{mainthm4} and \ref{mainthm3}.

\begin{lemma}\cite{besicovitch}\label{lbesicovitch} Let $f$ be an entire function of finit order $\rho$ where $0<\rho<1/2,$ and $\epsilon>0$ be a given real constant. Then there exist a set $S\subset [0,\infty)$ that has upper density at least $1-2\rho$ such that $|f(z)|>\exp(|z|^{\rho-\epsilon})$ for all $z$ satisfying $|z|\in S.$ 
\end{lemma}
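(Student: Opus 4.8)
The plan is to derive the lemma from the classical minimum-modulus ($\cos\pi\rho$) theorem together with the elementary observation that $\log M(r,f)$ is itself large on a set of radii of upper density $1$. First I would make the usual reductions: if $f$ is a polynomial then $|f(z)|\to\infty$ and there is nothing to prove, so assume $f$ is transcendental; then (as $\rho<1$) $f$ has infinitely many zeros and, by Hadamard's theorem, no nonconstant exponential factor, so
\[
 f(z)=cz^{m}\prod_{n}\Bigl(1-\frac{z}{a_{n}}\Bigr),
\]
a canonical product of genus $0$ times a monomial, with $n(t):=\#\{n:|a_{n}|\le t\}=O(t^{\rho+\epsilon})$ since the exponent of convergence of the zeros is $\le\rho$. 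Because $|cz^{m}|=|c|\,r^{m}$ is bounded below by a positive constant for large $r$, it suffices to construct $S\subset[0,\infty)$ with $\overline{\operatorname{dens}}(S)\ge 1-2\rho$ on which $\log m(r,f)>r^{\rho-\epsilon}$, where $m(r,f):=\min_{|z|=r}|f(z)|$; then $|f(z)|\ge m(|z|,f)>\exp(|z|^{\rho-\epsilon})$ for $|z|\in S$, the monomial and all constant losses being absorbed by a harmless preliminary shrinking of $\epsilon$.

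For the easy half, the function $t\mapsto\log M(e^{t},f)$ is nondecreasing and $\limsup_{t\to\infty}\frac{\log\log M(e^{t},f)}{t}=\rho$, so there are $r_{k}\to\infty$ with $\log M(r_{k},f)>r_{k}^{\rho-\epsilon/2}$. Choosing $\delta>0$ so small that $\rho-\tfrac{\epsilon}{2}>(1+\delta)(\rho-\epsilon)$, monotonicity gives $\log M(r,f)>r^{\rho-\epsilon}$ for every $r\in[r_{k},r_{k}^{1+\delta}]$ and all large $k$; since $r_{k}^{-\delta}\to 0$, the set $A:=\{r:\log M(r,f)>r^{\rho-\epsilon}\}$ contains intervals of unbounded relative length and therefore has upper density $1$.

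The core is the minimum-modulus estimate. From the factorization, the pointwise inequality $|1-z/a_{n}|\ge\bigl|1-|z|/|a_{n}|\bigr|$ on $|z|=r$ gives $\log m(r,f)\ge\log|c|+m\log r+\sum_{n}\log\bigl|1-r/|a_{n}|\bigr|$, while $|1-z/a_{n}|\le 1+r/|a_{n}|$ gives $\log M(r,f)\le\log|c|+m\log r+\sum_{n}\log(1+r/|a_{n}|)$; after replacing the zeros by their moduli, resp. the negatives of their moduli, the problem becomes a comparison of the two transforms
\[
 \int_{0}^{\infty}\frac{r\,n(t)}{t(t+r)}\,dt
 \qquad\text{and}\qquad
 \mathrm{p.v.}\!\int_{0}^{\infty}\frac{r\,n(t)}{t\,(r-t)}\,dt
\]
of the counting function. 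On the extremal profile $n(t)=t^{\rho}$ these equal $\frac{\pi}{\sin\pi\rho}\,r^{\rho}$ and $\pi\cot(\pi\rho)\,r^{\rho}$, whose ratio is $\cos\pi\rho>0$ (here $\rho<1/2$ is used), and for a general $n(t)=O(t^{\rho+\epsilon})$ the same comparison persists up to $\epsilon$ once $r$ avoids an exceptional set $E$ produced by zeros clustering near $|z|=r$; a careful summation over the zeros shows $\overline{\operatorname{dens}}(E)\le 2\rho$. This is exactly the classical $\cos\pi\rho$ (minimum-modulus) theorem of Wiman, Valiron and Besicovitch, which I would invoke in the form: for each $\epsilon>0$ there is $E\subset[1,\infty)$ with $\overline{\operatorname{dens}}(E)\le 2\rho$ such that $\log m(r,f)\ge(\cos\pi\rho-\epsilon)\log M(r,f)$ for all $r\notin E$.

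It then remains to put $S:=A\setminus E$: since $\overline{\operatorname{dens}}(A)=1$ and $\overline{\operatorname{dens}}(E)\le 2\rho$ we get $\overline{\operatorname{dens}}(S)\ge 1-2\rho$, and for $r\in S$
\[
 \log m(r,f)\ \ge\ (\cos\pi\rho-\epsilon)\log M(r,f)\ >\ (\cos\pi\rho-\epsilon)\,r^{\rho-\epsilon}\ >\ r^{\rho-2\epsilon}
\]
for all sufficiently large $r$, and relabelling $2\epsilon$ as $\epsilon$ finishes the argument. The genuinely hard point is the third paragraph — both the sharp constant $\cos\pi\rho$ and, more delicately, the bound $2\rho$ on the density of the exceptional radii on which zeros accumulate near the circle $|z|=r$; everything else is routine bookkeeping.
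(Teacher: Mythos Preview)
The paper does not prove this lemma; it is quoted from Besicovitch and used as a black box, so there is no proof in the paper to compare against.

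Your reduction is sound in outline. The set $A=\{r:\log M(r,f)>r^{\rho-\epsilon}\}$ does have upper density $1$ by the interval-stretching argument you give, and combining this with a minimum-modulus estimate of the shape $\log m(r,f)\ge(\cos\pi\rho-\epsilon)\log M(r,f)$ off an exceptional set $E$ of upper density at most $2\rho$ does yield the conclusion. The density arithmetic $\overline{\operatorname{dens}}(A\setminus E)\ge 1-2\rho$ is also correct once one notes that $\overline{\operatorname{dens}}(E)\le 2\rho$ means $|E\cap[0,R]|/R\le 2\rho+o(1)$ for \emph{all} large $R$, in particular along a sequence $R_n$ realising $\overline{\operatorname{dens}}(A)=1$.

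The gap is that the form of the $\cos\pi\rho$ theorem you invoke in the third paragraph --- with the exceptional set having upper density at most $2\rho$ --- \emph{is} Besicovitch's theorem. The Wiman--Valiron version gives only a $\limsup$ inequality with no control on how often it holds; the density bound $2\rho$ on the exceptional radii is exactly the content of the 1927 paper the lemma cites. So your third paragraph is not a proof but a restatement of what has to be proved, as you essentially concede in your closing sentence. A self-contained argument would have to carry out the ``careful summation over the zeros'' you allude to: one covers the moduli $|a_n|$ by short intervals around which $\log|1-r/|a_n||$ is badly negative, and bounds the total length of these intervals via $n(t)=O(t^{\rho+\epsilon})$; the constant $2\rho$ emerges from that computation. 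As a roadmap your write-up is accurate and correctly isolates the hard step; as a proof it defers that step to the very reference the lemma is quoting.
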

Recently, Pant and Saini \cite{pantsaini} prove following result for an entire function.  It has been used in the proof of Theorem \ref{mainthm4} and \ref{mainthm1}.

\begin{lemma}\cite{pantsaini}\label{lpantsaini}
	Suppose $f$ is a transcendental entire funtion then, there
	exists a set $F\subset(0,\infty)$ with finite logarithmic measure such that for all $z$ satisfying $|z| = r\in F$ and $|f(z)| = M(r, f)$ we have
	$$
		\left|\frac{f(z)}{f^{(m)}(z)}\right| \leq 2r^m,
	$$
	for all $m\in N.$
\end{lemma}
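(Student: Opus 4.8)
The plan is to read this off from the Wiman--Valiron method. Write the Taylor series $f(z)=\sum_{n\ge 0}a_nz^n$, and for $r>0$ let $\mu(r)=\max_{n\ge 0}|a_n|r^n$ be the maximum term and $\nu(r)$ the central index (the largest $n$ for which $|a_n|r^n=\mu(r)$). Since $f$ is transcendental, $\nu(r)$ is non-decreasing as a function of $\log r$ and $\nu(r)\to\infty$ as $r\to\infty$; fix $r_0$ with $\nu(r)\ge 1$ for every $r\ge r_0$.

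First I would invoke the classical Wiman--Valiron estimate for the logarithmic derivatives at the maximum--modulus point (see, e.g., \cite{laine1993nevanlinna} or Hayman's survey of the Wiman--Valiron method): there is a set $F\subset(0,\infty)$ of finite logarithmic measure such that, for every $r\notin F$ and every $z$ with $|z|=r$ and $|f(z)|=M(r,f)$, one has
$$\frac{f^{(m)}(z)}{f(z)}=\left(\frac{\nu(r)}{z}\right)^{m}\bigl(1+o(1)\bigr)\qquad(r\to\infty),$$
for each $m\in\mathbb{N}$. Taking reciprocals and moduli gives
$$\left|\frac{f(z)}{f^{(m)}(z)}\right|=\frac{r^{m}}{\nu(r)^{m}}\,\bigl|1+o(1)\bigr|^{-1}.$$

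It then remains to push the two error factors into the constant $2$. For $r\ge r_0$ we have $\nu(r)\ge 1$, hence $r^{m}/\nu(r)^{m}\le r^{m}$; and since $|1+o(1)|^{-1}\to 1$ there is $R\ge r_0$ with $|1+o(1)|^{-1}\le 2$ for $r\ge R$. Enlarging $F$ to $F\cup(0,R]$ (still of finite logarithmic measure) we conclude $|f(z)/f^{(m)}(z)|\le 2r^{m}$ whenever $|z|=r\notin F$ and $|f(z)|=M(r,f)$, which is the desired inequality.

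The delicate point, and the main obstacle, is the \emph{uniformity in $m$}: the exceptional set $F$ must be one and the same for every $m$, and the cutoff $R$ should not grow with $m$. The quantitative form of the Wiman--Valiron theorem supplies a single $F$ independent of $m$; as for the cutoff, the $o(1)$ above is in fact of size $O\!\left(m^{2}/\nu(r)\right)$, so the estimate is in force as soon as $m^{2}$ is small against $\nu(r)$ --- and since the applications in Theorems \ref{mainthm4} and \ref{mainthm1} only ever use finitely many values of $m$ (no more than the order of the equation), one common cutoff $R$ does the job. The comfortable constant $2$, rather than $1+\epsilon$, is precisely what creates the room for these absorptions.
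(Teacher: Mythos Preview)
The paper does not prove this lemma; it is quoted from \cite{pantsaini} and used as a black box in the proofs of Theorems~\ref{mainthm4} and~\ref{mainthm1}. Your Wiman--Valiron argument is the standard (and almost certainly the original) route, and the sketch is sound.

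Two remarks are worth recording. First, the printed statement carries a typo: the estimate should hold for $r\notin F$, not $r\in F$, since an exceptional set of finite logarithmic measure is where the estimate may \emph{fail}; you have silently and correctly read it this way. Second, your discussion of the uniformity in $m$ is exactly on point. The basic asymptotic $f^{(m)}(z)/f(z)=(\nu(r)/z)^m(1+o(1))$ comes with an error depending on $m$, so the lemma \emph{as literally stated} --- one exceptional set $F$ and one constant $2$ valid for all $m\in\mathbb{N}$ simultaneously --- does not drop out of Wiman--Valiron without further care. Your resolution, that the applications in this paper only ever use $m\le n$ (the order of the differential equation) so a common cutoff $R$ handles the finitely many relevant $m$, is precisely what is needed here and matches how the lemma is actually invoked.
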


The following Lemma has been used in our examples.
\begin{lemma}\cite{chang}\label{lchang}
	 If $f$ is an entire function of positive lower order of growth $\mu(f)>0$, then there exist a curve $\Gamma$ that goes from a finite point to $\infty$ for which
	 $$\min\left(\frac{1}{2}, \mu(f)\right)\leq\lim\limits_{z\to\infty}\inf\limits_{z\in\Gamma}\frac{\log\log |f(z)|}{\log |z|}.$$
\end{lemma}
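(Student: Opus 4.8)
The plan is to localise the problem to a single ``tract'' of $f$ and to run a curve to $\infty$ through it along which $|f|$ is forced to be large, the two main tools being the Carleman--Tsuji length--area estimate and a ``follow the maximum modulus'' argument via Wiman--Valiron theory. First, $\mu(f)>0$ forces $f$ to be transcendental, since polynomials have lower order $0$. For $R>0$ set $\Omega(R)=\{z:|f(z)|>R\}$. As $\log|f|$ is harmonic wherever $f\neq0$ and $f$ is entire, $|f|$ obeys the maximum principle, so no component of $\Omega(R)$ is bounded; and since a transcendental entire function is not a proper self-map of $\mathbb{C}$, the set $\{z:|f(z)|\le R\}$ is unbounded, so no component of $\Omega(R)$ eventually fills entire circles. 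Hence, for an unbounded component $D$ of $\Omega(R)$, writing $\theta_D(r)$ for the angular measure of $D\cap\{|z|=r\}$, the Carleman--Tsuji estimate gives
\[
\log\Bigl(\max_{|z|=r,\ z\in D}|f(z)|\Bigr)\ \ge\ \pi\int_{r_0}^{r}\frac{dt}{t\,\theta_D(t)}\ -\ O(1),
\]
so in particular $\sup_D|f|=\infty$; consequently, given any $R_1<R_2<\cdots\to\infty$ one may select nested unbounded components $D_1\supseteq D_2\supseteq\cdots$ with $D_j$ a component of $\Omega(R_j)$.

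I would then run a dichotomy on the width $\theta_j:=\theta_{D_j}$. If some $D_j$ is ``thin'', say $\theta_j(t)\le t^{-1/2+\epsilon}$ on a set of $t$ of positive lower density, the displayed estimate yields $\max_{|z|=r,\,z\in D_j}\log|f(z)|\gtrsim r^{1/2-\epsilon}$ along a sequence $r\to\infty$, and a harmonic-measure comparison inside the thin tube $D_j$ shows that $\log|f|\gtrsim r^{1/2-\epsilon}$ along a curve running through $D_j$ to $\infty$, so that $\log\log|f(z)|\gtrsim(1/2-\epsilon)\log|z|$ on it. If instead the tract that carries the maximum modulus is ``wide'', I would feed in the lower-order hypothesis directly: by definition $\log M(r,f)\ge r^{\mu(f)-\epsilon}$ for all large $r$, while Wiman--Valiron theory (valid for $r$ off a set of finite logarithmic measure) gives $f(z)\approx(z/z_r)^{\nu(r)}f(z_r)$ on a disc about a point $z_r$ with $|f(z_r)|=M(r,f)$; chaining such discs produces a path to $\infty$ on which $\log|f(z)|\gtrsim\log M(r,f)\gtrsim|z|^{\mu(f)-\epsilon}$ (with $r\asymp|z|$), hence $\log\log|f(z)|\gtrsim(\mu(f)-\epsilon)\log|z|$. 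Either way the exponent $\min(1/2,\mu(f))-\epsilon$ is attained; letting $\epsilon\to0$ along a sequence and diagonalising yields a single curve $\Gamma$ with $\liminf_{z\to\infty,\,z\in\Gamma}\log\log|f(z)|/\log|z|\ge\min(1/2,\mu(f))$.

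The crux, and the main obstacle, is to keep the two branches consistent and to guarantee that the constructed curve genuinely escapes to $\infty$ with $|f|$ still large on it. In the ``wide tract'' branch the danger is that the maximum-modulus point $z_r$ jumps around the circle whenever the central index $\nu(r)$ grows abruptly, which breaks the chaining and forces a careful treatment of the Wiman--Valiron exceptional set so that the linking radii still tend to $\infty$; it is exactly this potential irregularity that caps the exponent one can guarantee in general at $\min(1/2,\mu(f))$ rather than at $\mu(f)$. The length--area estimate is what absorbs the bad cases: when the maximum escapes a tract the tract splits into a thinner one, throwing us back into the first branch, where the bound $1/2-\epsilon$ already suffices; there the constant $\tfrac12$ is the familiar harmonic-measure threshold, angular width $\asymp t^{-1/2}$ being the borderline below which rapid growth inside the tract is automatic.
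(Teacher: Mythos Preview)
The paper does not prove this lemma at all: it is quoted verbatim from Chang \cite{chang} and used only as a black box in Examples~\ref{egnecemainthm4}, \ref{egnecemainthm5} and \ref{further}. There is therefore no ``paper's own proof'' to compare against; any assessment has to be against the original source or against the internal coherence of your sketch.

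As a sketch your plan is in the right circle of ideas (tracts of $\log|f|$, Carleman--Tsuji/length--area, harmonic-measure lower bounds), and this is indeed how results of this type are proved. However, the argument as written has real gaps at the two places you yourself flag. First, the dichotomy ``thin versus wide'' is not made precise: you need a quantitative threshold on $\theta_D(t)$ and a proof that one alternative must hold on a set of radii large enough to push the integral $\int dt/(t\theta_D(t))$ past $(\tfrac12-\epsilon)\log r$; the condition ``$\theta_j(t)\le t^{-1/2+\epsilon}$ on a set of positive lower density'' does not by itself yield that integral bound. Second, the Wiman--Valiron ``chaining'' is the genuine difficulty and you have essentially restated the problem rather than solved it: the local comparison $f(z)\approx(z/z_r)^{\nu(r)}f(z_r)$ holds only in discs of radius $\asymp r/\nu(r)^{1/2+\delta}$, and nothing prevents $z_r$ from jumping by a full half-turn between admissible radii, so the discs need not overlap and no curve results. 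Chang's actual argument bypasses Wiman--Valiron entirely and works with harmonic measure in a single tract, extracting the curve from level sets of harmonic measure; that is the missing idea here. Finally, the displayed Carleman--Tsuji inequality is stated backwards in spirit: the length--area method bounds harmonic measure from above, which gives a lower bound on $\log|f|$ only at points of the tract where harmonic measure is bounded below, not automatically at the maximum on $D\cap\{|z|=r\}$.
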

The following lemma gives the property of an entire function with Fabry gap and can be found in \cite{long} and \cite{wuzheng}. It is used to prove the result of example \ref{further}.
\begin{lemma}\label{lfabry}
	Let $g(z)=\sum\limits_{n=0}^\infty a_{\lambda_n}z^{\lambda_n}$ be an entire function of finite order with Fabry gap, and $f(z)$ be an entire function with $\rho(f)\in (0,\infty)$. Then for any given $\epsilon \in (0, \rho(f))$, there exists a set $H \subset (1,+\infty)$ satisfying $\overline{logdense}(H) \geq \xi$, where $\xi \in (0, 1)$ is a constant such that for all $|z| = r \in H$, one has
	$$\log M(r, h) > r^{\rho(f)-\epsilon}, \log m(r, g) > (1 - \xi)\log M(r, g),$$ where $M(r, h) = max\{ |h(z)| : |z| = r\}$ , $m(r, g) = min\{ |g(z)| :|z| = r\}$ and $M(r, g) = max\{ |g(z)| : |z| = r\}.$
\end{lemma}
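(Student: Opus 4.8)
The plan is to assemble this lemma, exactly as in \cite{long} and \cite{wuzheng}, from two essentially independent ingredients: first, a lower bound for the maximum modulus of $f$ that holds not merely along a sequence of radii but on a set of radii of positive upper logarithmic density; and second, the classical fact that an entire function with Fabry gaps has minimum modulus asymptotically as large as its maximum modulus outside a thin set of radii. The constant $\xi$ is then produced by intersecting the two radius sets. (I read the undefined $h$ in the statement as $f$, which is plainly what is intended, since the exponent on the right-hand side is $\rho(f)-\epsilon$.)

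For the first ingredient I would argue directly from the definition of order. Since $0<\rho(f)<\infty$, we have $\overline{\lim}_{r\to\infty}\log^{+}\log^{+}M(r,f)/\log r=\rho(f)$, so there is a sequence $r_k\to\infty$ with $\log M(r_k,f)>r_k^{\rho(f)-\epsilon/2}$. Because $r\mapsto\log M(r,f)$ is nondecreasing, on the interval $I_k=[r_k,\,r_k^{1+\delta}]$ we still have $\log M(r,f)\ge\log M(r_k,f)>r_k^{\rho(f)-\epsilon/2}\ge r^{\rho(f)-\epsilon}$, provided $\delta>0$ is chosen so small that $\rho(f)-\epsilon/2>(1+\delta)(\rho(f)-\epsilon)$ (possible since $0<\epsilon<\rho(f)$). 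Each $I_k$ carries logarithmic measure $\delta\log r_k$ inside $[1,r_k^{1+\delta}]$, so the union $H_1=\bigcup_k I_k$ satisfies $\overline{logdense}(H_1)\ge\delta/(1+\delta)=:\xi\in(0,1)$.

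For the second ingredient I would invoke the Fabry gap theorem for $g(z)=\sum a_{\lambda_n}z^{\lambda_n}$ in the form used in \cite{long} and \cite{wuzheng}: the gap hypothesis forces $\log m(r,g)=(1+o(1))\log M(r,g)$ as $r\to\infty$ outside a set of radii of zero logarithmic density. Applying this with the very constant $\xi$ just produced, the set $H_2$ of radii with $\log m(r,g)>(1-\xi)\log M(r,g)$ has logarithmic density one. Finally set $H=H_1\cap H_2$: removing from $H_1$ the complement of $H_2$ (which meets $[1,R]$ in logarithmic measure $o(\log R)$) does not change the upper logarithmic density, so $\overline{logdense}(H)\ge\xi$, and on $H$ both displayed inequalities hold simultaneously.

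The genuine obstacle is the second ingredient: passing from the qualitative Fabry gap theorem to the quantitative statement that the set of radii on which $\log m(r,g)$ is within a factor $1-\xi$ of $\log M(r,g)$ has full logarithmic density, together with the bookkeeping that lets the \emph{same} $\xi$ serve both as the density lower bound and in the factor $1-\xi$. A self-contained proof would reproduce the standard estimates that the Fabry gap structure forces on the central index and the maximal term of $g$; since this is already recorded in \cite{long} and \cite{wuzheng}, I would cite it there and keep the new work confined to the elementary interval argument of the first step and the intersection of densities at the end.
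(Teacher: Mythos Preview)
The paper does not prove this lemma at all: it is stated as a citation from \cite{long} and \cite{wuzheng} and used as a black box in Example~\ref{further}. So there is no ``paper's own proof'' to compare your attempt against; your sketch already supplies more than the paper does. Your two-ingredient decomposition (an interval-thickening argument to upgrade the lim sup defining $\rho(f)$ to a set of positive upper logarithmic density, followed by Fuchs's Fabry-gap theorem giving $\log m(r,g)=(1+o(1))\log M(r,g)$ off a set of zero logarithmic density, then intersecting) is the standard route taken in those references, and the arithmetic you give for $\xi=\delta/(1+\delta)$ and for the choice of $\delta$ via $\rho(f)-\epsilon/2>(1+\delta)(\rho(f)-\epsilon)$ is correct. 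Your reading of the undefined $h$ as $f$ is also the only sensible interpretation.
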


\section{Results}
Gundersen, et. al. \cite{gundersen1992finite}, consider 
$$\max\{\rho(a_1), \rho(a_2), \ldots, \rho(a_{n-1}), \rho(H)\}<\rho(a_0)<\frac{1}{2},$$
and prove that all non-trivial solutions of equation \eqref{nonhomonorder} are of infinite order. Hellerstein et. al. \cite{hellermilesrossi}, improved the condition 
$$\max.\{\rho(a_1), \rho(a_2), \ldots, \rho(a_{n-1}, \rho(H)\}<\mu(a_0)\leq\frac{1}{2}$$
and proved the same result. Recently, Kumar and Saini \cite{kumarsaini}, consider the case 
$$\max\{\rho(a_i), i\neq j, \rho(H) \}<\mu(a_j)<\frac{1}{2}$$ and gives the same conclusion.\\

    \textbf{Theorem A.}\  \cite{kumarsaini}\ \  \emph{Suppose coefficient in equation \eqref{nonhomonorder} satisfy $\max\{\rho(a_0), \rho(a_1), \ldots, \rho(a_{j-1}),$ \linebreak $\rho(a_{j+1}), \ldots , \rho(a_{n-1}), \rho(H(z)) \}<\mu(a_j)<\frac{1}{2}, j=1,2, \ldots, n-1.$ Then, all non-trivial solution of equation \eqref{nonhomonorder} are of infinite order.}\\

  We have replaced the condition of above theorem to 
$$\max\{\rho(a_i), i\neq j, \rho(H) \}<\rho(a_j)<\frac{1}{2}.$$
 The statement of our first theorem is as follows.  
    \begin{theorem}\label{mainthm4}
	Suppose coefficients in equation \eqref{nonhomonorder} satisfy $\max\{\rho(a_0), \rho(a_1), \ldots, \rho(a_{j-1}),$ \linebreak $\rho(a_{j+1}), \ldots , \rho(a_{n-1}), \rho(H(z)) \}<\rho(a_j)<\frac{1}{2}.$ Then, all non-trivial solution of equation \eqref{nonhomonorder} are of infinite order.
\end{theorem}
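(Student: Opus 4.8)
The plan is a proof by contradiction. Suppose $f\not\equiv 0$ solves \eqref{nonhomonorder} with $\rho(f)=\sigma<\infty$. Write $\alpha:=\rho(a_j)\in(0,\tfrac12)$, let $\beta$ be the maximum of $\rho(H)$ and of the numbers $\rho(a_i)$ with $i\ne j$, so that $\beta<\alpha$, and fix $\epsilon>0$ small enough that $\beta+\epsilon<\alpha-\epsilon$. We may assume $f$ is transcendental: a polynomial solution would, after isolating $a_jf^{(j)}$ in \eqref{nonhomonorder}, give an identity whose left-hand side has order of growth $\alpha$ and whose right-hand side has order at most $\beta<\alpha$, which is impossible.

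The idea is to isolate $a_j$ and exploit the fact that an entire function of order $<\tfrac12$ is large on a set of positive density. I would divide \eqref{nonhomonorder} by $f$ to get
\[ a_j(z)\,\frac{f^{(j)}(z)}{f(z)} = \frac{H(z)}{f(z)} - \frac{f^{(n)}(z)}{f(z)} - \sum_{\substack{1\le k\le n-1\\ k\ne j}} a_k(z)\,\frac{f^{(k)}(z)}{f(z)} - a_0(z), \]
and then collect four ingredients, each valid for $|z|=r$ outside a suitable exceptional set: (a) by Lemma \ref{lgundersen}(iii) there is a set $E\subset[0,\infty)$ of finite linear measure with $|f^{(k)}(z)/f(z)|\le r^{k(\sigma+\epsilon)}$ for $|z|\notin E$ and $1\le k\le n$; (b) from the definition of order, $|a_i(z)|\le\exp(r^{\beta+\epsilon})$ for $i\ne j$ and $|H(z)|\le\exp(r^{\beta+\epsilon})$ once $r$ is large; (c) by Besicovitch's Lemma \ref{lbesicovitch} applied to $a_j$ (legitimate since $0<\alpha<\tfrac12$) there is a set $S\subset[0,\infty)$ of upper density at least $1-2\alpha>0$ with $|a_j(z)|>\exp(r^{\alpha-\epsilon})$ for $|z|\in S$; and (d) by Pant--Saini's Lemma \ref{lpantsaini} there is a set $F$ of finite logarithmic measure such that at any $z$ with $|z|=r\notin F$ and $|f(z)|=M(r,f)$ one has $|f^{(j)}(z)/f(z)|\ge 1/(2r^{j})$ (trivial when $j=0$).

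Then, for each large $r$ in the set $S\setminus(E\cup F)$ --- which is unbounded, since $S$ has positive upper density while $E\cup F$ has finite measure and hence density zero --- I would choose $z_r$ with $|z_r|=r$ and $|f(z_r)|=M(r,f)$ and substitute (a)--(d) into the displayed identity, also using $|H(z_r)/f(z_r)|\le\exp(r^{\beta+\epsilon})/M(r,f)\le\exp(r^{\beta+\epsilon})$. This yields
\[ \frac{\exp\bigl(r^{\alpha-\epsilon}\bigr)}{2\,r^{j}} \le |a_j(z_r)|\left|\frac{f^{(j)}(z_r)}{f(z_r)}\right| \le C\,\exp\bigl(r^{\beta+\epsilon}\bigr)\,r^{\,n(\sigma+\epsilon)} \]
for some constant $C=C(n)$ and all large $r\in S\setminus(E\cup F)$. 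Since $\alpha-\epsilon>\beta+\epsilon$, the left side grows faster than $\exp(r^{\beta+\epsilon})$ times any power of $r$ as $r\to\infty$, a contradiction. Therefore \eqref{nonhomonorder} admits no non-trivial solution of finite order, i.e.\ every non-trivial solution has infinite order.

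I expect the main obstacle to be not any individual estimate but the coordination of the exceptional sets with the choice of evaluation point. Gundersen's lemma yields only an upper bound for logarithmic derivatives, so the crucial lower bound $|f^{(j)}/f|\ge 1/(2r^j)$ must be manufactured at a maximum-modulus point through Lemma \ref{lpantsaini}; and one must verify that the positive-density set $S$ from Besicovitch's lemma survives the removal of the finite-measure sets $E$ and $F$, so that $r$ can genuinely be sent to $\infty$ within $S\setminus(E\cup F)$. Once these points are secured, the remainder is routine growth estimation.
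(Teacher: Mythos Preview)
Your proof is correct and follows essentially the same route as the paper's: both argue by contradiction, invoke Gundersen's logarithmic-derivative estimates (Lemma~\ref{lgundersen}), Besicovitch's minimum-modulus lemma (Lemma~\ref{lbesicovitch}) for the lower bound on $|a_j|$, and the Pant--Saini estimate (Lemma~\ref{lpantsaini}) to control $|f/f^{(j)}|$ at a maximum-modulus point, then combine these on the unbounded set $S\setminus(E\cup F)$ to force a contradiction. The only cosmetic difference is that you divide \eqref{nonhomonorder} by $f$ and bound $|a_j|\cdot|f^{(j)}/f|$ from below, whereas the paper divides by $a_j f^{(j)}$ and bounds $1$ from above; your handling of the exceptional sets and of the polynomial case is in fact slightly more explicit than the paper's.
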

\begin{proof}
	Suppose $f$ be any non-trivial solution of equation \eqref{nonhomonorder} of finite order. Then, from Lemma \ref{lgundersen} there exist a set $F\subset [0,\infty]$ of finite linear measure such that for all $|z|\not\in F,$ 
	\begin{equation}\label{ffiniteorder(i)}
		\left|\frac{f^{(g)}(z)}{f^{(h)}(z)}\right| \leq |z|^{g(\rho-1+\epsilon)},\ h<g.
	\end{equation}
	Since $f$ is a transcendental entire function, there exist  a set $G\subset(0,\infty)$ with finite logarithmic measure such that for all $z$ satisfying $|z| = r\in G$ and $|f(z)| = M(r, f)$ we have
\begin{equation}\label{eqpantsaini}
		\left|\frac{f(z)}{f^{(m)}(z)}\right| \leq 2r^m,
	\end{equation}	
	for all $m\in N.$ Let $\omega$ and $\delta$ be fixed constant such that $\max\{\rho(a_0), \ldots, \rho(a_{j-1}), \rho(a_{j+1}), \ldots,$ \linebreak $\rho(a_{(n-1)}, H(z))\}<\omega<\delta <\rho(a_j)$. Thus, there exist a constant $R>0$ such that
	\begin{equation}\label{omega}
		|a_i(z)|\leq \exp\{|z|^\omega\} , i=0, 1, \ldots j-1, j+1, \ldots, n-1
	\end{equation}
	and 
	\begin{equation}\label{H}
		|H(z)|\leq \exp\{|z|^\omega\}.
	\end{equation}
	Using Lemma \ref{lbesicovitch}, there exist a set $S\subset[0,\infty)$ that has upper density atleast $1-2\rho(a_j)$ such that $|z|\in S$ satisfying
	\begin{equation}\label{eq<1/2}
		|a_j(z)|>\exp(|z|^\delta).
	\end{equation}
	Using \eqref{nonhomonorder}, \eqref{ffiniteorder(i)}, \eqref{eqpantsaini}, \eqref{omega}, \eqref{H} and \eqref{eq<1/2}, for $|z|\in (G\cup S)\setminus F$ and $|f(z)|=M(r,f)$ we get
	\begin{align*}
		1 & = -\frac{1}{a_j(z)}\frac{f^{(n)}}{f^{(j)}}-\sum_{k=2}^{i-1}\frac{a_k(z)}{a_j(z)}\frac{f^{(k)}}{f^{(j)}}-\sum_{k=i+1}^{n-1}\frac{a_k(z)}{a_j(z)}\frac{f^{(k)}}{f^{(j)}}+\frac{H(z_r)}{a_j(z_r)}\frac{1}{f^{(j)}}  \\
		& \leq \sum_{k=1}^{j-1}\left|\frac{a_k(z)}{a_j(z)}\right|\left|\frac{f^{(k)}}{f^{(j)}}\right|+\sum_{k=j+1}^{n}\left|\frac{a_k(z)}{a_j(z)}\right|\left|\frac{f^{(k)}}{f^{(j)}}\right|+\left|\frac{H(z)}{a_j(z)}\right|\frac{1}{|f^{(j)}|} \\
		& \leq  \left|\frac{f}{f^{(j)}}\right|\sum_{k=1}^{j-1}\left|\frac{a_k(z)}{a_j(z)}\right|\left|\frac{f^{(k)}}{f}\right|+\sum_{k=j+1}^{n}\left|\frac{a_k(z)}{a_j(z)}\right|\left|\frac{f^{(k)}}{f^{(j)}}\right|+\left|\frac{H(z)}{a_j(z)}\right|\frac{1}{|f^{(j)}|} \\
		& \leq  2r^m\sum_{k=1}^{j-1}\frac{\exp\{|z|^\omega\}}{\exp(|z|^\delta)}|z|^{j(\rho-1+\epsilon)}+\sum_{k=j+1}^{n}\frac{\exp\{|z|^\omega\}}{\exp(|z|^\delta)}|z|^{j(\rho-1+\epsilon)}+\frac{\exp\{|z|^\omega\}}{\exp(|z|^\delta)}|\frac{1}{|f^{(j)}|} \\
		& \leq 2r^m n\exp\{|z|^\omega-|z|^\delta)\}|z|^{j(\rho-1+\epsilon)}+\exp\{|z|^\omega-|z|^\delta\}\frac{1}{|f^{(j)}|}\\	
		& \leq \exp\{|z|^\omega-|z|^\delta\}\left(2n|z|^{m+n(\rho-1+\epsilon)}+\frac{1}{|f^{(j)}|}\right)
	\end{align*}
	Since $\exp\{|z|^\omega-|z|^\delta\}\to 0$, we get
	$$1 \leq \exp\{|z|^\omega-|z|^\delta\}\left(2n|z|^{m+n(\rho-1+\epsilon)}+\frac{1}{|f^{(j)}|}\right)\to 0$$
	which is a contradiction.
	
\end{proof}
 The example \ref{egnecemainthm4}, shows that if we skip the condition of Theorem \ref{mainthm4}, then we can get a non-trivial solution of finite order.
\begin{example}\label{egnecemainthm4}
Consider second order linear differential equation
\begin{equation}\label{2ordernonhomo}
f''+A(z)f'+B(z)f=H(z),
\end{equation}
where $A(z)$, $B(z)$ and $H(z)$ are entire functions. Let $f$ be a non- trivial solution of equation \eqref{2ordernonhomo} satisfying the condition  
\begin{equation}\label{egcond}
\rho(A)<\rho(B)<\rho(f)<\frac{1}{2}.
\end{equation}
 Let $\alpha$, $\beta$ and $\gamma$ are real constants such that
$$\rho(A)<\alpha<\beta<\rho(B)<\gamma<\rho(f)<\frac{1}{2}.$$
By definition of order of growth, we have
\begin{equation}\label{egA}
	|A(z)|<e^{|z|^\alpha}.
\end{equation}
Let $\mu(B)=\rho(B)$, then by Lemma \ref{lchang} for $z\in\Gamma$, we have
\begin{equation}\label{egB}
|B(z)|>e^{|z|^\beta}.
\end{equation} 
Since $\rho(f)<\frac{1}{2}$ using Lemma \ref{lbesicovitch} for $S\subset [0, \infty)$ having positive upper density, we have
\begin{equation}\label{egf}
|f(z)|> e^{|z|^\gamma}.
 \end{equation}
 Using equation \eqref{ffiniteorder(i)}, \eqref{egA} and \eqref{egB}, for $z\in\Gamma$, $|z|\in S\setminus F$ and $z\to\infty$, we get
 \begin{eqnarray}\label{o(1)+1}
 \left|\frac{1}{B(z)}\frac{f''}{f}+\frac{A(z)}{B(z)}\frac{f'}{f}+1 \right| & \leq& \frac{1}{|B(z)|}\left|\frac{f''}{f}\right|+\left|\frac{A(z)}{B(z)}\right|\left|\frac{f'}{f}\right|+1 \nonumber \\  
 & \leq& e^{-|z|^\beta}|z|^c+e^{|z|^\alpha-|z|^\beta}|z|^c+1 \nonumber\\  
 & \leq&  o(1)+1.
 \end{eqnarray}
From equation \eqref{2ordernonhomo}, \eqref{egf} and \eqref{o(1)+1},
\begin{align*}
 |H(z)| & = |f(z)||B(z)|\left|\frac{1}{B(z)}\frac{f''}{f}+\frac{A(z)}{B(z)}\frac{f'}{f}+1 \right| \\
 & > e^{|z|^\gamma+|z|^\beta}(o(1)+1) \\
 & > e^{|z|^\gamma(1+o(1))}(o(1)+1) \\
\end{align*}
Thus, $\rho(H)\geq \gamma$, since $\gamma$ is arbitrarily near to $\rho(f)$, $\rho(H)\geq \rho(f).$ But from equation \eqref{2ordernonhomo} and \eqref{egcond}, $\rho(H)\leq \rho(f).$ Thus, $\rho(H) = \rho(f).$ Hence, we have $\rho(A)<\rho(B)<\rho(H)<\frac{1}{2}$ and $f$ is a finite order solution of equation $\eqref{2ordernonhomo}.$ It shows that condition of Theorem \ref{mainthm4} are necessary.   
\end{example}
In Theorem \ref{mainthm4}, there is a limitations on $\rho(H)$ that it should be less than half and also less than $\rho(a_j), i\neq j.$ So the question arise, is it necessary to always have all non-trivial solution of equation \eqref{nonhomonorder} of infinite order, when $$\max\{\rho(a_0), \rho(a_1), \ldots, \rho(a_{j-1}), \rho(a_{j+1}), \ldots , \rho(a_{n-1}) \}<\rho(a_j)<\frac{1}{2}\leq \rho(H)?$$ We have constructed an example to answer this question.
\begin{example}\label{egnecemainthm5}
	Let $f$ be a non-trivial solution of equation \eqref{2ordernonhomo} such that 
	\begin{equation}\label{egmainthm4cond}
	\rho(A)<\rho(B)<\frac{1}{2}\leq \rho(f),
	\end{equation}
satisfying $\mu(f)=\rho(f).$ Let $\alpha$, $\beta$ and $\gamma$ are real constants such that $$\rho(A)<\alpha<\beta<\rho(B)<\frac{1}{2}<\gamma\leq\rho(f).$$
Then, by Lemma \ref{lchang}, $z\in\Gamma$, we have
\begin{equation}\label{egfmainthm4}
	|f(z)|>e^{|z|^\gamma}.
\end{equation} 
Since $\rho(B)<\frac{1}{2}$ using Lemma \ref{lbesicovitch} for $S\subset [0, \infty)$ having positive upper density, we have
\begin{equation}\label{egBmainthm4}
	|B(z)|> e^{|z|^\beta}.
	\end{equation}
Using the same logic as in example \ref{egnecemainthm4} for $z\in \Gamma$, $z\to\infty$ and $|z|\in S\setminus F$, we get
\begin{align*}
	|H(z)| > e^{|z|^\gamma(1+o(1))}(o(1)+1) 
\end{align*}
Thus, $\rho(H)\geq \gamma$, since $\gamma$ is arbitrarily close to $\rho(f)$, $\rho(H)\geq \rho(f).$ But from equation \eqref{2ordernonhomo} and \eqref{egmainthm4cond}, $\rho(H)\leq \rho(f).$ Thus, $\rho(H) = \rho(f).$ Hence, we have $\rho(A)<\rho(B)<\frac{1}{2}\leq \rho(H)$ and $f$ is a finite order solution of equation $\eqref{2ordernonhomo}.$
\end{example}
Thus, example \ref{egnecemainthm5}, shows that we can get finite order solution when $\rho(H)\geq\frac{1}{2}.$ In Theorem \ref{mainthm5}, we have partially overcome this situation by considering the condition that exponent of convergence is strictly less than order of growth and order should be finite. By Hadamard factorization theorem, it is already known that if $\lambda(H)<\rho(H)=n,$ then $H(z)=h(z)e^{P(z)}$, where $h(z)$ is an entire function and $P(z)$ is a polynomial of degree $n$, where $\rho(h)<n.$ 
\begin{theorem}\label{mainthm5}
	Let the coefficient of equation \eqref{nonhomonorder} satisfy $\max\{\rho(a_1), \ldots, \rho(a_{(n-1)}\}<\rho(a_0)<\frac{1}{2}$. and $\lambda(H(z))<\rho(H(z)).$ Then, all non-trivial solution of equation \eqref{nonhomonorder} are of infinite order.
\end{theorem}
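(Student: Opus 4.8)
The plan is to argue by contradiction: suppose $f\not\equiv 0$ solves \eqref{nonhomonorder} with $\rho(f)=\rho<\infty$, and derive a contradiction. Two cost-free reductions come first. Since $\lambda(H)<\rho(H)$, Hadamard's factorization theorem shows that $q:=\rho(H)$ is a positive integer and that $H=h\,e^{P}$ with $P$ a polynomial of degree $q$ and $\rho(h)=\lambda(H)<q$; in particular $H$ is transcendental. Second, the elementary estimate $\rho\bigl(\sum_{k}a_k f^{(k)}\bigr)\le\max\{\rho(f),\rho(a_0),\dots,\rho(a_{n-1})\}=\max\{\rho(f),\rho(a_0)\}$ applied to \eqref{nonhomonorder} gives $q=\rho(H)\le\max\{\rho(f),\rho(a_0)\}$; as $q\ge1>\tfrac12>\rho(a_0)\ge\rho(a_k)$ for every $k$, this forces $\rho(f)\ge q\ge1$, so $f$ strictly dominates every coefficient in order. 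I will also use the known fact that under $\max\{\rho(a_1),\dots,\rho(a_{n-1})\}<\rho(a_0)<\tfrac12$ every non-trivial solution of the homogeneous equation \eqref{norder} has infinite order, and so does every non-trivial solution of \eqref{nonhomonorder} when its right-hand side has order $<\rho(a_0)$ (Gundersen et al.).

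Next I substitute $F=f\,e^{-P}$. Expanding $f^{(k)}=e^{P}\sum_{j\le k}\binom{k}{j}Q_{k-j}F^{(j)}$ with $Q_i=(e^{P})^{(i)}e^{-P}$ (a polynomial), equation \eqref{nonhomonorder} becomes
\[
F^{(n)}+c_{n-1}F^{(n-1)}+\dots+c_1F'+c_0F=h,
\]
with $c_0=a_0+\sum_{k\ge1}a_kQ_k$ and $c_j=a_j+\sum_{k>j}\binom{k}{j}a_kQ_{k-j}$. Here $\rho(c_0)=\rho(a_0)$, since $a_0$ is added to a function of strictly smaller order, while $\rho(c_j)<\rho(a_0)$ for $1\le j\le n-1$; thus the transformed equation has exactly the coefficient pattern $\max\{\rho(c_1),\dots,\rho(c_{n-1})\}<\rho(c_0)<\tfrac12$, its right-hand side $h$ has order $\lambda(H)<q$, and $\rho(F)\le\max\{\rho(f),q\}<\infty$. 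If $\lambda(H)<\rho(a_0)$, this contradicts the quoted result immediately. Hence we may assume from now on that $\rho(a_0)\le\lambda(H)<\rho(H)$; note that $h$ is then a canonical product with $\lambda(h)=\rho(h)$, so no further Hadamard reduction is available.

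In this remaining range the key new input is Lemma \ref{lcritical}. Fix $\max\{\rho(a_k):k\ge1\}<\omega<\delta<\rho(a_0)$. Applying Lemma \ref{lcritical} to $H=h\,e^{P}$ gives $E\subset[0,2\pi)$ of measure zero off which $|H(re^{\iota\theta})|\le\exp((1-\epsilon)\delta(P,\theta)r^{q})\to0$ when $\delta(P,\theta)<0$, and $|H(re^{\iota\theta})|\ge\exp((1-\epsilon)\delta(P,\theta)r^{q})$ when $\delta(P,\theta)>0$, the sets $\{\delta(P,\cdot)<0\}$ and $\{\delta(P,\cdot)>0\}$ each having measure $\pi$. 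Applying Lemma \ref{lbesicovitch} to $a_0$ gives $S\subset[0,\infty)$ of upper density at least $1-2\rho(a_0)>0$ with $|a_0(z)|>\exp(|z|^{\delta})$ for $|z|\in S$, while $|a_k(z)|\le\exp(|z|^{\omega})$ for $k\ge1$ and $|z|$ large. Choosing a direction $\theta_0$ with $\delta(P,\theta_0)<0$, outside $E$, outside the exceptional set of Lemma \ref{lgundersen}(i) for $f$, and outside the countable set of arguments of zeros of $f$, Lemma \ref{lgundersen}(i) bounds $|f^{(k)}/f|$ by a power of $r$ along $\arg z=\theta_0$; rewriting \eqref{nonhomonorder} as $a_0f=H-f^{(n)}-\sum_{k=1}^{n-1}a_kf^{(k)}$ and evaluating along that ray at $r\in S$, $r\to\infty$, the terms besides $H$ are $o(|a_0(z)f(z)|)$, so $|f(re^{\iota\theta_0})|\le 2|H(re^{\iota\theta_0})|\exp(-r^{\delta})\to0$ super-exponentially; the same bound holds for every $\theta$ in the positive-measure set $\{\delta(P,\cdot)<0\}$. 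Reading \eqref{nonhomonorder} the other way on $\{\delta(P,\cdot)>0\}$ and using the lower bound for $|H|$ gives, for all large $r$, $|f(re^{\iota\theta})|\ge\exp(c(\theta)r^{q})$ with $c(\theta)>0$.

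The final step — converting this dichotomy ($f$ super-small on a family of rays of measure $\pi$ at radii $r\in S$, super-large on the complementary rays) into a contradiction with $\rho(f)<\infty$ — is the crux, and I expect it to be the hardest part. Pure growth estimates cannot suffice, since a function of exact order $q$ such as $e^{z^{q}}$ already displays precisely this behavior; the equation must be used essentially. One natural route is to observe that on the rays $\{\delta(P,\cdot)<0\}$ the estimate above forces $a_0f=H(1+o(1))$, hence $F'/F=f'/f-P'$ is there comparable to $h'/h-a_0'/a_0$, a quantity bounded in modulus by $r^{\lambda(H)-1+\epsilon}+r^{\rho(a_0)-1+\epsilon}$ with exponents $<q-1$; combined with the opposite estimate on $\{\delta(P,\cdot)>0\}$, this should pin the growth of $F=f\,e^{-P}$ down tightly enough to contradict the nonexistence of an admissible finite-order solution of the transformed equation $F^{(n)}+c_{n-1}F^{(n-1)}+\dots+c_0F=h$ in the regime $\rho(a_0)\le\lambda(H)$. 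Making that reconciliation rigorous is where the real work lies.
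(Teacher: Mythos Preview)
Your proposal is explicitly incomplete: you stop at the dichotomy (``$f$ super-small on a family of rays of measure $\pi$ \ldots, super-large on the complementary rays'') and concede that ``making that reconciliation rigorous is where the real work lies.'' The paper never attempts that reconciliation, because it never sets up the dichotomy. There is no substitution $F=fe^{-P}$, no transformed equation, no case split on whether $\lambda(H)<\rho(a_0)$ or $\rho(a_0)\le\lambda(H)$, and no use whatsoever of the rays where $\delta(P,\cdot)>0$.

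The paper's argument is a single direct estimate. Divide \eqref{nonhomonorder} by $a_0 f$, choose one ray $\theta_0\in[0,2\pi)\setminus E$ with $\delta(P,\theta_0)<0$ (so that Lemma~\ref{lcritical} gives $|H(re^{\iota\theta_0})|\le\exp((1+\epsilon)\delta(P,\theta_0)r^{m})\to0$), take $r\in S\setminus F$ where $S$ comes from Lemma~\ref{lbesicovitch} applied to $a_0$ and $F$ is the Gundersen exceptional set, and write
\[
1\;\le\;\sum_{k=1}^{n}\frac{e^{r^{\omega}}}{e^{r^{\delta}}}\,r^{k(\rho-1+\epsilon)}
\;+\;\frac{e^{(1+\epsilon)\delta(P,\theta_0)r^{m}}}{e^{r^{\delta}}}\cdot\frac{1}{|f|}\,,
\]
declaring the right-hand side to tend to $0$. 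That is the entire proof as written. You in fact carry out the identical computation in the middle of your argument (your bound $|f(re^{\iota\theta_0})|\le 2|H(re^{\iota\theta_0})|\exp(-r^{\delta})$ is the same inequality, rearranged), but you treat it as a launching point for further analysis rather than as the conclusion.

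One remark: your own computation shows $|f|\to 0$ along such rays, so the factor $1/|f|$ in the displayed inequality is unbounded, and the paper's assertion that the last term tends to $0$ is not justified there. Your instinct that this step needs more is therefore reasonable; but the elaborate substitution route you take does not close that gap either, and none of that machinery appears in the paper.
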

\begin{proof}
	Suppose $f$ be any non-trivial solution of finite order of equation \eqref{nonhomonorder}. Then, using Lemma \ref{lgundersen}, we get
	there exists a set $F\subset [0,\infty]$ of finite linear measure such that for all $|z|\not\in F,$  we get \eqref{ffiniteorder(i)}. Let $\omega$ and $\delta$ be fixed constant such that $\max\{\rho(a_1), \ldots, \rho(a_{n-1})\}<\rho(a_0)<\frac{1}{2}$. Thus, there exist a constant $R>0$ such that
	\begin{equation}\label{omegaa_i}
		|a_i(z)|\leq e^{|z|^\omega}, i=1, \ldots, n-1.
	\end{equation}
	 Since $H(z)$ is an finite order entire function satisfying $\lambda(H)<\rho(H),$ then it can be rewritten as $H(z)=h(z)e^{P(z)}$, where $h(z)$ is an entire function, $P(z)$ is a polynomial of degree $m$ and $\rho(h)<\deg P$. Thus, using Lemma \ref{lcritical}, there exist a set $E\subset [0, 2\pi )$ of zero linear measure such that $\delta(P,\theta)<0$,
	\begin{equation}\label{H(z)critical}
		|H(z)|\leq e^{(1+\epsilon)\delta(P,\theta) r^m}.
	\end{equation}
	Using Lemma \ref{lbesicovitch}, there exist a set $S\subset[0,\infty)$ that has upper density atleast $1-2\rho(a_0)$ satisfying for $|z|\in S$
	\begin{equation}\label{eq<1/2a_0}
		|a_0(z)|>e^{|z|^\delta}.
	\end{equation} 
	From equation \eqref{nonhomonorder}, \eqref{ffiniteorder(i)}, \eqref{omegaa_i}, \eqref{H(z)critical} and \eqref{eq<1/2a_0}, for $|z|\in S\cap F$ and $\theta\in[0,2\pi)\setminus E$ along with $\delta(P, \theta)<0$, we can get the following. 
	\begin{align*}
		1 & = -\frac{1}{a_0(z)}\frac{f^{(n)}}{f}-\sum_{k=1}^{n-1}\frac{a_k(z)}{a_0(z)}\frac{f^{(k)}}{f}+\frac{H(z)}{a_0(z)}\frac{1}{f}  \\
		& \leq \frac{1}{|a_0(z)|}\left|\frac{f^{(n)}}{f}\right|+\sum_{k=1}^{n-1}\left|\frac{a_k(z)}{a_0(z)}\right|\left|\frac{f^{(k)}}{f}\right|+\left|\frac{H(z)}{a_0(z)}\right|\frac{1}{|f|} \\
 & \leq  \sum_{k=1}^{n}\frac{e^{|z|^\omega}}{e^{|z|^\delta}}|z|^{j(\rho-1+\epsilon)}+\frac{e^{{(1+\epsilon)\delta(P,\theta) r^m}}}{e^{|z|^\delta}}\frac{1}{|f|} \\
		& \leq  e^{|z|^\omega-|z|^\delta}n|z|^{n(\rho-1+\epsilon)}+e^{(1+\epsilon)\delta(P,\theta) r^m-|z|^\delta}\frac{1}{|f|}\\	
	& \to 0,
	\end{align*}
	which is a contradiction.	
\end{proof}
What would happen when 
\begin{equation}\label{geqhalf}
\max\{\rho(a_0), \rho(a_1), \ldots, \rho(a_{j-1}), \rho(a_{j+1}), \ldots , \rho(a_{n-1}), \rho(H(z)) \}<\rho(a_j) 
\end{equation} and  $\rho(a_j)\geq\frac{1}{2} ?$
 Is all non-trivial solutions of equation \eqref{nonhomonorder} are of infinite order? The next example is answer to this question, where we get a finite order non-trivial solution. In Theorem \ref{mainthm3}, we add some condition on the coefficient and partially solved the problem. 
\begin{example}
	$f(z)=e^z$ is a finite order solution of linear differential equation
	$$f''+e^zf'+e^{2z}f=e^z+e^{-z}-1.$$
	Comparing, it with equation \eqref{2ordernonhomo} $\rho(A)=\rho(H)<\rho(B)$ and it have finite order solution. Thus, it shows that equation \eqref{nonhomonorder} with condition \eqref{geqhalf} can have non-trivial finite order solution.
\end{example}
\begin{theorem}\label{mainthm3}
	Suppose the coefficients in equation \eqref{nonhomonorder} satisfy $\max\{\rho(a_1), \rho(a_2), \ldots, \rho(a_{n-1}), \rho(H)\}<\rho(a_0)$ and $\lambda(a_0)<\rho(a_0).$ Then, all non-trivial solution of equation \eqref{nonhomonorder} are of infinite order.
\end{theorem}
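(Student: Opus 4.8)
The plan is to run the scheme of Theorems~\ref{mainthm4} and~\ref{mainthm5} with $a_0$ now playing the role of the dominant coefficient. Suppose $f$ is a non-trivial solution of \eqref{nonhomonorder} of finite order. Then $f$ is transcendental, since a non-zero polynomial $f$ would make the left side of \eqref{nonhomonorder} of order $\rho(a_0)$ while the right side has order at most $\max\{\rho(a_1),\dots,\rho(a_{n-1}),\rho(H)\}<\rho(a_0)$. Because $\lambda(a_0)<\rho(a_0)$, Hadamard's theorem gives $a_0=h\,e^{P}$ with $P$ a polynomial of degree $m:=\rho(a_0)\ge 1$ and $\rho(h)<m$. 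Fix $\omega$ with $\max\{\rho(a_1),\dots,\rho(a_{n-1}),\rho(H)\}<\omega<m$, so that $|a_i(z)|\le e^{|z|^{\omega}}$ for $1\le i\le n-1$ and $|H(z)|\le e^{|z|^{\omega}}$ once $|z|$ is large. By Lemma~\ref{lgundersen}(i) there is a set $E_1\subset[0,2\pi)$ of linear measure zero such that along each ray $\arg z=\psi$ with $\psi\notin E_1$ one has $|f^{(k)}(z)/f(z)|\le |z|^{k(\rho(f)-1+\epsilon)}$ for $|z|$ large and $1\le k\le n$. By Lemma~\ref{lcritical} applied to $a_0$ there is a set $E\subset[0,2\pi)$ of linear measure zero such that for $\theta\notin E$ with $\delta(P,\theta)>0$ one has $|a_0(re^{\iota\theta})|\ge \exp\!\big((1-\epsilon)\delta(P,\theta)r^{m}\big)$ for $r$ large.

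Next I would estimate along a ray $\arg z=\theta_0$ with $\theta_0\notin E_1\cup E$ and $\delta(P,\theta_0)>0$; such $\theta_0$ exists because $\{\theta:\delta(P,\theta)>0\}$ is a union of $m$ open sectors of opening $\pi/m$, of total measure $\pi$. Writing \eqref{nonhomonorder} as $a_0f=H-\sum_{k=1}^{n}a_kf^{(k)}$ with $a_n\equiv 1$, the logarithmic-derivative bounds give $\big|\sum_{k=1}^{n}a_kf^{(k)}(z)\big|\le C\,e^{|z|^{\omega}}|z|^{n(\rho(f)-1+\epsilon)}|f(z)|$ on this ray, a quantity dominated by $\tfrac12|a_0(z)||f(z)|$ for $|z|$ large since $m>\omega$. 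Hence for $|z|=r$ large on this ray
\[
|f(re^{\iota\theta_0})|\le \frac{2|H(re^{\iota\theta_0})|}{|a_0(re^{\iota\theta_0})|}\le 2\exp\!\big(r^{\omega}-(1-\epsilon)\delta(P,\theta_0)r^{m}\big)\longrightarrow 0 .
\]
Letting $\theta_0$ range over the admissible directions shows $f$ decays super-exponentially, like $\exp(-c\,r^{m})$, throughout all $m$ sectors where $\delta(P,\cdot)>0$. Integrating $\log^{+}|1/f|$ over these sectors (using $\int_{\{\delta(P,\cdot)>0\}}\delta(P,\theta)\,d\theta=2|a_m|>0$, with $a_m$ the leading coefficient of $P$) forces $m(r,1/f)\ge c_1 r^{m}$ for some $c_1>0$, hence $T(r,f)\gtrsim r^{m}$ and $\rho(f)\ge m=\rho(a_0)$. (If instead $\rho(f)<1/2$, Lemma~\ref{lbesicovitch} gives $|f(z)|>\exp(|z|^{\rho(f)-\epsilon})>1$ on a set of radii of positive upper density, along which $H/(a_0f)\to0$ on the dominant ray and the contradiction is immediate; but by the previous sentence this case does not occur.) So it remains to rule out $\rho(f)$ finite with $\rho(f)\ge m$.

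For this final step, if $\rho(f)=m$ then $f$ is an entire function of order $m$ whose indicator $h_f$ satisfies $h_f(\theta)\le -c<0$ on each closed sector of opening $\pi/m$, which is impossible because $h_f$ is $m$-trigonometrically convex and cannot be strictly negative on a closed arc of length $\pi/m$. If $\rho(f)>m$ I would apply the same reasoning to the entire function $g:=a_0f-H=-\sum_{k=1}^{n}a_kf^{(k)}$, which by the previous estimate satisfies $|g|=o(|H|)=e^{o(r^{m})}$ on the sectors where $\delta(P,\cdot)>0$; a Phragm\'en--Lindel\"of argument on these $m$ sectors and their complementary sectors, using $\lambda(a_0)<\rho(a_0)$ (so that $h$ and the auxiliary small-order entire functions occurring are controlled by minimum-modulus bounds, and by Lemma~\ref{lbesicovitch} when the order drops below $1/2$), should force $g\equiv 0$; then $f^{(n)}+a_{n-1}f^{(n-1)}+\dots+a_1f'\equiv 0$ and $a_0f\equiv H$, so $f=H/a_0=(H/h)e^{-P}$, which is entire only if $h\mid H$, in which case $\rho(f)=m$, contradicting $\rho(f)>m$. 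The genuinely hard point — and the reason the non-homogeneous case is far more delicate than the homogeneous one — is precisely this reduction: dividing \eqref{nonhomonorder} by $a_0f$ on the sectors where $a_0$ dominates only yields $H/(a_0f)\to 1$, not $\to 0$, so no contradiction is visible ray-by-ray, and one must instead exploit the behaviour of $f$ (equivalently of $g$) simultaneously on all the sectors where $|a_0|$ is large — which is exactly where the factorization hypothesis $\lambda(a_0)<\rho(a_0)$ is used.
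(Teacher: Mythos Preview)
The paper's argument is far shorter than yours. It divides \eqref{nonhomonorder} by $a_0 f$, applies Lemma~\ref{lgundersen}(iii) to the logarithmic derivatives, uses the hypothesis to bound $|a_i|,|H|\le e^{|z|^{\omega}}$ with $\omega<\rho(a_0)$, and invokes Lemma~\ref{lcritical} for a lower bound on $|a_0|$ along a ray with $\delta(P,\theta)>0$; it then writes
\[
1\le e^{|z|^{\omega}-\delta(P,\theta)r^{m}}\Bigl(n|z|^{n(\rho+\epsilon)}+\tfrac{1}{|f(z)|}\Bigr)
\]
and declares that the right-hand side tends to $0$, a contradiction. The term $1/|f|$ that you single out as ``the genuinely hard point'' is not isolated or discussed in the paper; the bracket is simply treated as harmless. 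So while your instinct that this term deserves care is reasonable, the paper does not share your concern, and its entire proof is the one-step contradiction you explicitly set aside.

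Your own longer programme, by contrast, has real gaps in its last two steps. In the case $\rho(f)=m$ you assert that $h_f(\theta)\le -c<0$ on each \emph{closed} arc of length $\pi/m$, but your decay estimate only yields $h_f(\theta)\le -(1-2\epsilon)\delta(P,\theta)$, which vanishes at the sector endpoints; an $m$-trigonometrically convex indicator may perfectly well be $\le 0$ on a closed $\pi/m$-arc and strictly negative only in the interior (for instance $h(\theta)=-\cos m\theta$, the indicator of $e^{-z^{m}}$), so no contradiction follows from what you have actually proved. In the case $\rho(f)>m$ you offer only a sketch (``a Phragm\'en--Lindel\"of argument \dots\ should force $g\equiv 0$''), but Phragm\'en--Lindel\"of on sectors of opening $\pi/m$ requires the function to have order at most $m$, whereas $g=a_0f-H$ inherits order $\rho(f)>m$ from $f$, and you have no control of $f$ or $g$ on the complementary sectors $\{\delta(P,\theta)<0\}$; the conclusion $g\equiv 0$ is therefore unjustified as written. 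The word ``should'' in your own text signals that this step has not been closed.
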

\begin{proof}
	Suppose $f$ be any non-trivial solution of equation \eqref{nonhomonorder} of finite order. Then, from Lemma \ref{lgundersen} there exists a set $F\subset[0, \infty)$ of finite linear measure such that for all $|z|\not\in F$ satisfying \eqref{ffiniteorder(i)}.	Let $\omega$ be any fixed constant such that $\max\{\rho(a_1), \rho(a_2), \ldots, \rho(a_{(n-1)}, H(z))\}<\omega <\rho(a_0)$. Thus, there exist a constant $R>0$ such that satisfying \eqref{omegaa_i} and \eqref{H}. 
	Since $\lambda(a_0)<\rho(a_0)$, thus it can be rewritten as $a_0(z)=h(z)e^{P(z)}$, where $P(z)$ is a polynomial of degree $k$ and $h(z)$ is an entire function satisfying $\rho< m$. Using Lemma \ref{lcritical} 
	\begin{equation}\label{eqcritical}
		\exp.\{(1+\epsilon)\delta(P,\theta)r^k\}\leq |a_o(z)|
	\end{equation}
	Using \eqref{nonhomonorder}, \eqref{ffiniteorder(i)}, \eqref{H},  \eqref{omegaa_i} and \eqref{eqcritical}, we get
	\begin{align*}
		1 & = -\frac{1}{a_0(z)}\frac{f^{(n)}}{f(z)}-\sum_{j=1}^{n-1}\frac{a_j(z)}{a_0(z)}\frac{f^{(j)}}{f(z)}+\frac{H(z)}{a_0(z)}\frac{1}{f(z)}  \\
		& \leq \sum_{j=1}^{n}\left|\frac{a_j(z)}{a_0(z)}\right|\left|\frac{f^{(j)}}{f(z)}\right|+\left|\frac{H(z)}{a_0(z)}\right|\frac{1}{|f(z)|} \\
		& \leq e^{|z|^\omega-\delta(P,\theta)r^m}\left(n|z|^{n(\rho+\epsilon)}+\frac{1}{|f|}\right)\\	
	\end{align*}
	Since $e^{|z|^\omega-\delta(P,\theta)r^m}\to 0$, we get
	$$1 \leq e^{|z|^\omega-\delta(P,\theta)r^m}\left(n|z|^{n(\rho+\epsilon)}+\frac{1}{|f|}\right)\to 0$$
	which is a contradiction.

\end{proof}
We can also get a finite order solution when
 $$\max\{\rho(a_1), \rho(a_2), \ldots, \rho(a_{n-1})\}<\rho(a_0)<\rho(H).$$ 
\begin{example}\label{further}
	Let $f$ be finite order non-trivial solution of equation \eqref{2ordernonhomo}. Let  	
	\begin{equation}\label{egmainthm3cond}
		\rho(A)<\rho(B)<\rho(f),
	\end{equation}
	such that $B(z)$ has Fabry gaps. Let $\alpha$, $\beta$ and $\gamma$ are real constants such that $$\rho(A)<\alpha<\beta<\rho(B)<\gamma<\rho(f).$$ By Lemma \ref{lfabry}, for $|z|\in H$ such that $H\subset (1, \infty)$ of positive upper logarithmic density, we have
	\begin{equation}\label{eqfabry}
		|B(z)|> e^{|z|^\beta}.
	\end{equation}
	Let $f$ be a such solution which satisfy $\mu(f)=\rho(f).$ 
	Then, by Lemma \ref{lchang}, $z\in\Gamma$ and $z\to\infty$, we have
	\begin{equation}\label{egfmainthm4}
		|f(z)|>e^{|z|^\gamma}.
	\end{equation} 
Using the same reason as in example \ref{egnecemainthm4}, for $z\in \Gamma$, $z\to\infty$ and $|z|\in H\setminus F,$ we can get
\begin{align*}
	|H(z)| > e^{|z|^\gamma(1+o(1))}(o(1)+1) 
\end{align*}
Thus, $\rho(H)\geq \gamma$, since $\gamma$ is arbitrarily nearer to $\rho(f)$, $\rho(H)\geq \rho(f).$ But from equation \eqref{2ordernonhomo} and \eqref{egmainthm3cond}, $\rho(H)\leq \rho(f).$ Thus, $\rho(H) = \rho(f).$ Hence, we have $\rho(A)<\rho(B)<\rho(f)$ and $f$ is a finite order solution of equation $\eqref{2ordernonhomo}.$
 
\end{example}

Recently, Pramanik et. al. \cite{pramanik2021growth} consider equation
\begin{equation}\label{eqpramanik}
	a_nf^{(n)}+a_{n-1}(z)f'+\ldots +a_1f'+a_0f=b(z)f+c(z),
\end{equation}
they extend the results of Bela\"{i}di \cite{bela2002idi} to non-homogenous linear differential equations. They consider $0\leq\alpha<\beta$ and $\xi>0$;  $|a_i(z)|\leq e^{\alpha |z|^\xi}, i=0, 1, \ldots, n$ with $a_n\not\equiv 0$, $|c(z)|\leq e^{\alpha |z|^\xi}$ and $b(z)\geq e^{\beta |z|^\xi}$.\\

\textbf{Theorem B.}\ \cite{pramanik2021growth}\ \emph{Let $E$ be a set of complex numbers satisfying $\overline{dens}\{|z|:z\in E\}$ and let $b (z), a_j (z) (j = 0, 1, \ldots, k)$ and $c(z)$ be entire functions such that for some constants $0\leq\alpha<\beta$ and $\xi>0$ we have $b(z)\geq e^{\beta |z|^\xi}$ and $|a_i(z)|\leq e^{\alpha |z|^\xi}, i=0, 1, \ldots, n;$ $|c(z)|\leq e^{\alpha |z|^\xi}$ as $z\to\infty$ for $z\in E.$ Then, every solution $f\not\equiv 0$ of equation \eqref{eqpramanik} is of infinite order.} \\

We have proved this result with standard non-homogenous equation \eqref{nonhomonorder}.
\begin{theorem}\label{mainthm1}
Let $a_i's, 1\leq i \leq n-1$ and $H(z)$ be an entire functions for $0\leq\alpha<\beta$ and $\xi>0$ satisfying $|a_i|\leq e^{\alpha |z|^\xi}$, $|a_j|\geq e^{\beta |z|^\xi}$, for $i\neq j$ and $|H(z)|\leq e^{\alpha |z|^\xi}$ where $z\to\infty$ and $z\in E$, where $E$ is a set of positive upper density. Then, all non-trivial solutions of equation \eqref{nonhomonorder} are of infinite order. 
\end{theorem}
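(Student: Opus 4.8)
The plan is to argue by contradiction, following closely the scheme of the proof of Theorem~\ref{mainthm4}, with the pointwise bounds $|a_i(z)|\le e^{\alpha|z|^\xi}$ $(i\ne j)$, $|a_j(z)|\ge e^{\beta|z|^\xi}$ and $|H(z)|\le e^{\alpha|z|^\xi}$ on $E$ replacing the order hypotheses and Besicovitch's Lemma~\ref{lbesicovitch}. Suppose $f$ is a non-trivial solution of \eqref{nonhomonorder} with $\rho(f)=\rho<\infty$. First I would observe that $f$ must be transcendental: if $f$ were a non-zero polynomial, then on the circles $|z|=r$ meeting $E$ the term $a_j(z)f^{(j)}(z)$ would dominate every other term of \eqref{nonhomonorder} by a factor of size $e^{(\beta-\alpha)r^\xi}\to\infty$, which is impossible. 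Hence $f$ is transcendental of finite order, so Lemma~\ref{lgundersen}(iii) supplies a set $F\subset[0,\infty)$ of finite linear measure with
\[
\left|\frac{f^{(g)}(z)}{f^{(h)}(z)}\right|\le |z|^{n(\rho+\epsilon)}\qquad(h<g,\ |z|\notin F),
\]
and, taking $h=0$, also $|f^{(g)}(z)/f(z)|\le|z|^{n(\rho+\epsilon)}$ for $1\le g\le n$; while Lemma~\ref{lpantsaini} supplies a set $G$ of finite logarithmic measure such that, for $|z|=r$ in the corresponding good set with $|f(z)|=M(r,f)$, one has $|f(z)/f^{(m)}(z)|\le 2r^m$ for every $m$, whence $|f^{(j)}(z)|\ge M(r,f)/(2r^j)$ and so $1/|f^{(j)}(z)|\le 1$ for $r$ large.

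Next I would divide \eqref{nonhomonorder} by $a_j(z)f^{(j)}(z)$, using the convention $a_n\equiv1$, to obtain
\[
1=-\sum_{\substack{0\le k\le n\\ k\ne j}}\frac{a_k(z)}{a_j(z)}\,\frac{f^{(k)}(z)}{f^{(j)}(z)}+\frac{H(z)}{a_j(z)}\cdot\frac{1}{f^{(j)}(z)},
\]
and estimate the right side at a point $z\in E$ with $|z|=r$, $|f(z)|=M(r,f)$, chosen so that $r\notin F$ and Lemma~\ref{lpantsaini} applies. For $k>j$ one has $|f^{(k)}/f^{(j)}|\le|z|^{n(\rho+\epsilon)}$ by Lemma~\ref{lgundersen}; for $k<j$ I would write $f^{(k)}/f^{(j)}=(f^{(k)}/f)(f/f^{(j)})$ and bound the two factors by Lemma~\ref{lgundersen} and Lemma~\ref{lpantsaini} respectively, getting $|f^{(k)}/f^{(j)}|\le 2|z|^{\,j+n(\rho+\epsilon)}$. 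Since on $E$ we have $|a_k/a_j|\le e^{(\alpha-\beta)r^\xi}$ for $k\ne j$ (and $1/|a_j|\le e^{-\beta r^\xi}\le e^{(\alpha-\beta)r^\xi}$ for $k=n$), and $|H/a_j|\le e^{(\alpha-\beta)r^\xi}$, this yields
\[
1\le C\,e^{(\alpha-\beta)r^\xi}\,|z|^{\,j+n(\rho+\epsilon)}
\]
for some constant $C=C(n)$. As $0\le\alpha<\beta$ and $\xi>0$, the right-hand side tends to $0$ as $r\to\infty$, a contradiction; hence no non-trivial solution of \eqref{nonhomonorder} has finite order.

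The step I expect to be the main obstacle is the one that is also delicate in Theorem~\ref{mainthm4}: producing a single sequence of radii $r_\nu\to\infty$ along which Lemma~\ref{lgundersen}, Lemma~\ref{lpantsaini} and the coefficient bounds on $E$ hold simultaneously — and, within this, ensuring that the coefficient bounds, which are hypothesised only for $z\in E$, are available at the maximum-modulus point of $f$ on $|z|=r_\nu$, which is precisely where Lemma~\ref{lpantsaini} is applied. This is possible because $\{|z|:z\in E\}$ has positive upper density, whereas the exceptional set $F$ (finite linear measure) and the exceptional set attached to Lemma~\ref{lpantsaini} (finite logarithmic measure) each have zero upper linear density; reading the hypothesis, as in Theorem~B, so that the estimates on the coefficients hold throughout the circles $|z|=r$ for $r$ in a set of positive upper density then makes the selection legitimate, and the displayed inequality, evaluated along such a sequence, completes the proof.
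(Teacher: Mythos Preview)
Your proposal is correct and follows essentially the same route as the paper's own proof: assume a finite-order solution, apply Lemma~\ref{lgundersen} and Lemma~\ref{lpantsaini}, divide \eqref{nonhomonorder} by $a_j f^{(j)}$, and use the gap $\alpha<\beta$ to force the right-hand side to $0$. The paper argues exactly this way (with Lemma~\ref{lgundersen}(ii) rather than (iii), which makes no essential difference); your extra remarks --- that $f$ must be transcendental, and that the positive-upper-density hypothesis on $\{|z|:z\in E\}$ is what allows one to find radii avoiding the finite-measure exceptional sets of Lemmas~\ref{lgundersen} and \ref{lpantsaini} --- make explicit precisely the points the paper leaves implicit, and your reading of the coefficient bounds as holding on whole circles $|z|=r$ (as in Theorem~B) is indeed the interpretation the paper's proof tacitly relies on.
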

\begin{proof}
	We are going to prove this theorem by contradiction. Let $f$ be any non-trivial solution of equation \eqref{nonhomonorder} of finite order. Using Lemma \ref{lgundersen}, we have \eqref{ffiniteorder(i)} for $z\not\in F\cup[0,1]$, where $F$ is a set of finite logarithmic measure. Using equation \eqref{nonhomonorder}, \eqref{ffiniteorder(i)} and the fact that $|a_i|\leq e^{\alpha |z|^\xi}$, $|a_j|\geq e^{\beta |z|^\xi}, \ i\neq j,$ $i = 0, \ldots, n-1$  and $|H(z)|\leq e^{\alpha |z|^\xi}$ for $0\leq\alpha<\beta$ and $\xi>0$ where $z\to\infty$ and $z\in E$, where $\overline{dens}\{ |z| : z \in E\}>0$, we get the following
	\begin{align*}
		1 & = -\frac{1}{a_j(z)}\frac{f^{(n)}}{f^{(j)}}-\sum_{k=1}^{j-1}\frac{a_k(z)}{a_j(z)}\frac{f^{(k)}}{f^{(j)}}+\frac{H(z)}{a_j(z)}\frac{1}{f^{(j)}}  \\
		& \leq \sum_{k=1}^{j-1}\left|\frac{a_k(z)}{a_j(z)}\right|\left|\frac{f^{(k)}}{f^{(j)}}\right|+\sum_{k=j+1}^{n}\left|\frac{a_k(z)}{a_j(z)}\right|\left|\frac{f^{(k)}}{f^{(j)}}\right|+\left|\frac{H(z)}{a_j(z)}\right|\frac{1}{|f^{(j)}|} \\
		& \leq  \left|\frac{f}{f^{(j)}}\right|\sum_{k=1}^{j-1}\left|\frac{a_k(z)}{a_j(z)}\right|\left|\frac{f^{(k)}}{f}\right|+\sum_{k=j+1}^{n}\left|\frac{a_k(z)}{a_j(z)}\right|\left|\frac{f^{(k)}}{f^{(j)}}\right|+\left|\frac{H(z)}{a_j(z)}\right|\frac{1}{|f^{(j)}|} \\
		& \leq  2r^m\sum_{k=1}^{j-1}\frac{e^{\alpha |z|^\xi}}{e^{\beta |z|^\xi}}|z|^{j(\rho-1+\epsilon)}+\sum_{k=j+1}^{n}\frac{e^{\alpha |z|^\xi}}{e^{\beta |z|^\xi}}|z|^{j(\rho-1+\epsilon)}+\frac{e^{\alpha |z|^\xi}}{e^{\beta |z|^\xi}}|\frac{1}{|f^{(j)}|} \\
		& \leq 2r^m ne^{2(\alpha-\beta)|z|^\xi}|z|^{j(\rho-1+\epsilon)}+e^{(\alpha-\beta)}\frac{1}{|f^{(j)}|}\\	
		& \leq e^{(\alpha-\beta)|z|^\xi}\left(2n|z|^{m+n(\rho-1+\epsilon)}+\frac{1}{|f^{(j)}|}\right)
	\end{align*}
	Since $e^{(\alpha-\beta)|z|^\xi}\to 0$, we get
	$$1 \leq e^{(\alpha-\beta)|z|^\xi}\left(2n|z|^{m+n(\rho-1+\epsilon)}+\frac{1}{|f^{(j)}|}\right)\to 0$$
	which is a contradiction.
\end{proof}

\end{document}